\newtheorem{theorem}{Theorem}
\newtheorem{problem}[theorem]{Problem}
\newtheorem{remark}[theorem]{Remark}
\newcounter{encoding}
\newtheorem{encoding}[encoding]{Encoding}
\newcommand{\Z}{\mathbb{Z}}
\newcommand{\inv}{^{-1}}
\title{New bounds for some small multicolor Ramsey numbers}
\author{William J. Wesley}
\date{\today}
\begin{document}

\maketitle

\begin{abstract}
    The Ramsey number $R(G_1,\dots,G_k)$ is the smallest $n$ such that every $k$-coloring of the edges of $K_n$ contains a monochromatic copy of $G_i$ in color $i$. Ramsey numbers are challenging to compute, and few are known exactly. We use Boolean satisfiability (SAT) solvers to search for structured colorings that give lower bounds, and we show $R(K_4,K_4-e,K_4-e) \ge 35$ and $R(K_3,K_4,C_4,C_4) \ge 49$. Moreover, we tighten some recent upper bounds for multicolor Ramsey numbers for cycles and show $R(C_3,C_6,C_6) = R(C_5,C_6,C_6) = 15$. Finally, we enumerate critical graphs for the numbers $R(C_4,K_{1,s})$ and $R(C_6,K_{1,s})$. 
\end{abstract}

\section{Introduction}

The \emph{multicolor Ramsey number} $R(G_1,\dots,G_k)$ is the smallest $n$ such that every $k$-coloring of the edges of $K_n$ contains a monochromatic copy of $G_i$ in color $i$. The most famous are the \emph{classical} Ramsey numbers where each $G_i$ is a complete graph $K_{n_i}$, and here we write simply $R(n_1,n_2,\dots, n_k)$. The two color cases have been studied extensively, and enormous computational and theoretical effort has gone into determining both small values and asymptotics. Despite this, we know few nontrivial values for these Ramsey numbers: the only exact values that have been determined are $R(3,t)$ for $3 \le t \le 9$, $R(4,4)$, and $R(4,5)$. The bounds on $R(5,5)$ stand at $43 \le R(5,5) \le 46$, and determining the precise value remains a notoriously difficult open problem \cite{ExooR55LowerBound,R55Le46,R45McKayRadzidzowksi}.

The three color classical case is even harder. Here we know only two values: $R(3,3,3) = 17$ from Greenwood and Gleason in 1955 \cite{R333Equals17} and $R(3,3,4) = 30$ from Codish, Frank, Itzhakov, and Miller in 2016 \cite{R334Equals30}. This latter result required extensive computation. Only a handful of other Ramsey numbers involving other graphs three or more colors are known, and we refer the reader to the comprehensive survey of Radziszowski \cite{RamseySurvey} for more details. 

While the best asymptotic lower bounds for (classical, two color) Ramsey numbers come from random colorings \cite{SpencerRamseyLB}, the lower bounds for small Ramsey numbers often come from highly symmetric and structured colorings. One family of these colorings is the \emph{Cayley colorings}, which arise from groups. More precisely, let $\Gamma$ be a group of order $n$ with identity $e$, and label the vertices of $K_n$ by the elements of $\Gamma$. A $k$-coloring $\chi$ of the edges of $K_n$ is a \emph{Cayley coloring} if for each color $c$ there exists a set $S_c \subseteq \Gamma\setminus \{e\}$ such that the edge $\{x,y\}$ is color $c$ if and only if $xy\inv \in S_c$. If the group $\Gamma$ is cyclic, then we say that $\chi$ is a \emph{circulant} coloring. Circulant colorings give good lower bounds for several small classical Ramsey numbers: there are circulant colorings for $R(s,t)$ for $(s,t) = (3,3), (3,4), (3,5), (4,4), (4,5)$, and $(3,9)$ that give tight lower bounds, though this is not the case in general \cite{HarborthKrauseCirculant,HarborthKrauseDistance}. 

Two natural generalizations of circulant and Cayley colorings are the \emph{block circulant} and \emph{block Cayley colorings}, which we define precisely in Section \ref{SectionPreliminaries}. Block circulant colorings were used by Exoo and Tatarevic to establish several classical Ramsey number lower bounds \cite{ExooTaterevic28Classical}. They were recently studied in \cite{BlockCircRamseyGoedVanOver, LidickyMcKinleyPfenderSmallBooksWheels} and \cite{WJWBooks} and produced new lower bounds for Ramsey numbers of the form $R(K_s,K_t-e)$ and $R(B_s,B_t)$, where $K_t-e$ is the graph obtained by removing one edge from the complete graph $K_t$, and $B_s$ is the \emph{book graph} $K_2 + \overline{K}_s$ that consists of $s$ triangles sharing a common edge. Block Cayley colorings have appeared implicitly in the literature as well. A famous coloring of Chung gives the best known lower bound of 51 for $R(3,3,3,3)$: this comes from a block Cayley 4-coloring with 3 blocks of size 16 that is extended by two additional vertices \cite{ChungR3333LB}. 

One goal of this project was to improve lower bounds for other multicolor Ramsey numbers by searching for structured colorings. There are many ways to search for interesting graphs and colorings, but we used Boolean satisfiability (SAT) solving. SAT solvers are powerful tools in computational combinatorics, and they have already been used successfully in improving Ramsey number bounds \cite{R55Le46,R334Equals30,WJWBooks,LowKapKapBereg,DybizbanskiHypergraphLB}. They also are able to certify nonexistence of colorings, which is useful for eliminating certain groups for block Cayley colorings. When no additional constraints are imposed to search for special colorings, a proof of nonexistence of a coloring gives an \emph{upper bound} for the Ramsey number. In general, certifying a Ramsey number upper bound with a SAT solver is more difficult than finding one coloring to prove a lower bound, but it can nonetheless be done in small cases. 

\subsection{Contributions}
We targeted multicolor Ramsey numbers of small to moderate size for various graphs. Our first result is an improvement on the lower bound for $R(K_4,K_4-e,K_4-e)$. The previous lower bound $R(K_4,K_4-e,K_4-e) \ge 33$ came from a coloring of $K_{32}$ found by Shetler, Wurtz, and Radziszowski using simulated annealing \cite{ShetlerWurtzRadz}, and the upper bound stands at 47 \cite{LidickyPfenderSDPRamsey}. 

\begin{theorem} \label{K4J4J4lb}
    $R(K_4,K_4-e,K_4-e) \ge 35$.
\end{theorem}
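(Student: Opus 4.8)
The plan is to exhibit an explicit $3$-coloring of the edges of $K_{34}$ with no monochromatic $K_4$ in the first color and no monochromatic $K_4-e$ in either the second or the third color; by definition the existence of such a coloring is equivalent to $R(K_4,K_4-e,K_4-e)\ge 35$. An unrestricted search over all $3$-colorings of $K_{34}$ is hopeless, so, following the strategy described above, I would restrict attention to highly structured colorings — specifically block circulant or block Cayley colorings on $34$ vertices. Natural candidates for the underlying structure are the cyclic group $\Z_{34}$, the group $\Z_{17}\times\Z_2$, or a partition of the $34$ vertices into two blocks of size $17$ (each carrying a $\Z_{17}$-circulant pattern) with a bipartite pattern between them; one can also try $34 = 2\cdot 16 + 2$ or similar ``block-plus-a-few-extra-vertices'' decompositions in the spirit of Chung's $R(3,3,3,3)$ coloring.

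Concretely, I would fix a symmetry group $\Gamma$ acting on the vertex set, introduce a small number of Boolean variables for each $\Gamma$-orbit of edges recording which of the three colors it receives (two Boolean variables per orbit suffice, together with a clause forbidding the undefined fourth combination), and then add clauses asserting that no $4$-subset of vertices induces a monochromatic $K_4$ in color $1$ and that no $4$ vertices spanning five edges of one color induce a monochromatic $K_4-e$ in colors $2$ or $3$. For the $K_4-e$ constraints it is convenient to use the reformulation that a graph is $(K_4-e)$-free precisely when no edge lies in two triangles, which keeps the clause set compact. Feeding this CNF to a modern SAT solver and, if it returns \texttt{SAT}, decoding the assignment produces a candidate coloring; I would then verify independently — by direct enumeration of all $\binom{34}{4}$ vertex quadruples — that the coloring really does avoid every forbidden monochromatic subgraph, so that the bound does not rest on trusting the solver.

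The main obstacle is not the verification but the design of the search: the structured ansatz must be loose enough to actually contain a valid coloring of $K_{34}$ (the previous record is only $K_{32}$, so a genuinely new construction is required) yet tight enough that the resulting SAT instance is solvable in practice. This means trying several groups and block sizes, possibly allowing a handful of ``free'' vertices outside the symmetric core, and tuning the encoding — symmetry breaking among the free vertices, choice of orbit representatives, and cardinality-type constraints on the color classes — until a solver succeeds. If no single group works, a fallback is to combine a partial symmetric skeleton with an unrestricted SAT search over the remaining edges, trading a larger instance for a weaker structural assumption.
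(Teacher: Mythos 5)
Your search strategy matches the paper's almost exactly: the published coloring of $K_{34}$ was found with Kissat using Encoding \ref{EncodingRamseySAT} augmented with block Cayley constraints on the first $32$ vertices (two blocks over the group SmallGroup($16$,$6$)) and two unconstrained extra vertices --- precisely the ``$34 = 2\cdot 16 + 2$'' decomposition in the spirit of Chung's $R(3,3,3,3)$ construction that you single out as a natural candidate. Your reformulation of $(K_4-e)$-freeness as ``no edge lies in two triangles of the same color'' is also correct and is a reasonable way to keep the clause set small, and your insistence on independent verification by enumerating all $\binom{34}{4}$ quadruples is exactly the right standard of rigor for this kind of result.

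The gap is that you never actually produce the coloring. Theorem \ref{K4J4J4lb} is a pure existence statement, and its entire proof content is the witness: a specific $3$-coloring of $E(K_{34})$ with no $K_4$ in one color and no $K_4-e$ in the other two. A description of how one \emph{would} search for such a coloring --- however well designed --- does not establish that one exists; the search could in principle fail for every group, block structure, and number of free vertices you try (indeed, for many plausible ansätze it does fail, which is why the previous record stood at $K_{32}$). Until the SAT solver returns a satisfying assignment and you decode and verify it, you have proved nothing about $R(K_4,K_4-e,K_4-e)$. To complete the argument you must carry out the computation and include the resulting adjacency-matrix-style table of edge colors (as the paper does), together with the verification that it avoids the forbidden monochromatic subgraphs.
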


We prove this by exhibiting a 3-coloring of $K_{34}$ that avoids $K_4$ in the first color and $K_4 - e$ in the other two colors. This is similar to Chung's $R(3,3,3,3)$ construction in that it consists of a block Cayley coloring extended by two vertices. We give the details of our coloring in Section \ref{SectionConstructions}. 

The Ramsey number $R(K_3,K_4,C_4,C_4)$ was first studied by Xu, Shao, and Radziszowski, who gave a lower bound of 42 \cite{XuShaoRadzRamseyQuadrilateral}. This arose from a construction involving a one-vertex extension of a product coloring of critical colorings for $R(K_3,K_4) = 9$ and $R(C_4,C_4) = 6$. Later, Dybizba\'nski and Dzido improved the lower bound to 43 with a coloring found by computer \cite{DyDzRamseyQuad}. Our result improves the lower bound to 49. 

\begin{theorem} \label{K3K4C4C4lb}
    $R(K_3,K_4,C_4,C_4) \ge 49$.
\end{theorem}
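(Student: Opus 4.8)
The plan is to prove $R(K_3,K_4,C_4,C_4) \ge 49$ by exhibiting an explicit $4$-coloring of the edges of $K_{48}$ with no monochromatic $K_3$ in color $1$, no monochromatic $K_4$ in color $2$, and no monochromatic $C_4$ in colors $3$ or $4$. Since $48 = 3 \cdot 16$, or perhaps $46 + 2$ following the Chung-style paradigm emphasized earlier in the paper, the natural search space is that of block Cayley colorings: one selects a group $\Gamma$ of order $16$ (or $15$, or $23$, etc.), partitions the vertex set into a small number of blocks indexed by $\Gamma$, and looks for color patterns that are invariant under the diagonal action of $\Gamma$, possibly adding one or two universal-style extension vertices at the end. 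I would encode the existence of such a coloring as a Boolean satisfiability instance whose variables record, for each $\Gamma$-orbit of pairs of vertices, which of the four colors that orbit receives, together with clauses forbidding each of the finitely many monochromatic forbidden subgraphs (every triangle in color $1$, every $K_4$ in color $2$, every $4$-cycle in colors $3$ and $4$), and then hand the instance to a modern SAT solver.

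The key steps, in order, would be: (i) fix the intended structure — decide the number and sizes of blocks, the acting group, and whether to allow extension vertices — guided by the observation that block Cayley colorings with $3$ blocks of size $16$ worked for $R(3,3,3,3)$; (ii) set up the symmetry-reduced SAT encoding, where the orbit structure under $\Gamma$ dramatically shrinks the number of variables, and add the clauses that rule out each forbidden monochromatic configuration; (iii) add symmetry-breaking and any additional structural constraints (for example, forcing certain connection sets to be symmetric or to avoid small forbidden local patterns) to make the search tractable; (iv) run the solver; and (v) from the returned satisfying assignment, reconstruct the explicit coloring of $K_{48}$ and independently verify — by direct, solver-free computation over all $\binom{48}{3}$ triples, all $\binom{48}{4}$ quadruples, and all $4$-cycles — that it contains none of the forbidden monochromatic subgraphs. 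The explicit coloring and its certificate are then presented in Section \ref{SectionConstructions}.

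The main obstacle is twofold. First, there is no guarantee that a coloring with the imposed symmetry exists at order $48$: the value $49$ is a substantial jump from the previously known $43$, so the structural ansatz (choice of group, block sizes, extension pattern) must be chosen shrewdly, and several candidates may need to be tried before one succeeds — a SAT solver reporting UNSAT for a given ansatz only rules out that ansatz, not the bound. Second, even with symmetry reduction the instance may be large, so engineering the encoding (good symmetry breaking, incremental solving, possibly cube-and-conquer) is where the real work lies. Once a witness coloring is in hand, however, the verification step is entirely routine: checking that a fixed edge-coloring of a $48$-vertex graph avoids a handful of small subgraphs is a finite computation that can be done quickly and, ideally, with an independently written checker, so the correctness of the final bound does not rest on trusting the SAT solver.
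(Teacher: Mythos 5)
Your proposal matches the paper's approach: the proof is an explicit $4$-coloring of $K_{48}$ avoiding the forbidden monochromatic subgraphs, found by a SAT solver (Kissat) with Cayley-type structural constraints and then verified directly. The only difference is a detail of the ansatz — the paper's witness imposes Cayley structure with respect to $\mathrm{SmallGroup}(48,4) \cong \Z_8 \times S_3$ on the $K_3$ and $K_4$ colors only (a single block of size $48$, with the $C_4$ colors left unconstrained), rather than the multi-block or extension-vertex variants you suggest, though the authors report successes with those as well.
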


The upper bound is still relatively far away, but it was recently improved to 75 by Boza and Radziszowski \cite{boza_radziszowski_2025}.
 For other upper bounds, we make progress on 3-color Ramsey numbers involving cycles. Many bounds and exact values for these numbers were determined by Sun, Yang, Wang, Li, and Xu in \cite{SunYangWangLiXu3ColorCycles} and Tse in \cite{Tse3ColorCycles} (and others, see \cite{RamseySurvey}). Lidick\'y and Pfender proved the upper bounds $R(C_3,C_6,C_6) \le 18$ and $R(C_5,C_6,C_6) \le 17$ using flag algebras and semidefinite programming \cite{LidickyPfenderSDPRamsey}. We tighten these bounds. 

\begin{theorem} \label{CycleUBs}
    $R(C_3,C_6,C_6) = 15$.
\end{theorem}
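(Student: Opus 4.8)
\emph{Proof proposal.} The equality splits into a lower and an upper bound. For the lower bound $R(C_3,C_6,C_6)\ge 15$ one need only exhibit a single $3$-coloring of $K_{14}$ with no $C_3$ in color $1$ and no $C_6$ in colors $2$ or $3$; this is the easier (satisfiability) direction and such a coloring is produced by the same SAT framework used elsewhere in the paper, and is in any case recorded in the literature \cite{RamseySurvey}. The substance of the theorem is therefore the upper bound $R(C_3,C_6,C_6)\le 15$, i.e.\ that \emph{every} $3$-coloring of $K_{15}$ contains a monochromatic $C_3$ in color $1$, or a monochromatic $C_6$ in color $2$, or a monochromatic $C_6$ in color $3$. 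The plan is to certify this with a SAT solver after a light structural preprocessing step.

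The preprocessing rests on the classical value $R(C_6,C_6)=8$ (Faudree--Schelp, Rosta). Let $G_1$ be the color-$1$ graph of a hypothetical good $3$-coloring of $K_{15}$; then $G_1$ is triangle-free. Consequently every neighborhood $N_{G_1}(v)$ is an independent set of $G_1$, so all edges inside it use only colors $2$ and $3$; if some $|N_{G_1}(v)|\ge 8$, those edges form a $2$-colored $K_8$ and hence contain a monochromatic $C_6$, a contradiction. Thus $\Delta(G_1)\le 7$, and the identical argument applied to an arbitrary independent set of $G_1$ gives $\alpha(G_1)\le 7$ (hence also $|E(G_1)|\le 52$). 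These facts are added to the instance as redundant clauses that preserve all solutions while sharply pruning the search.

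The encoding uses a Boolean variable $x_e^c$ for each edge $e$ of $K_{15}$ and each color $c\in\{1,2,3\}$: exactly-one-color clauses per edge; for each of the $\binom{15}{3}$ triangles, a clause forbidding all three of its edges from being color $1$; and for each of the $\binom{15}{6}\cdot 60$ copies of $C_6$ in $K_{15}$, one $6$-clause forbidding it from lying entirely in color $2$ and one forbidding color $3$. The formula is modest in size but its search space is vast, so symmetry breaking is essential: colors $2$ and $3$ are interchangeable and all of $S_{15}$ acts on the vertices. I would break these symmetries either statically, via lex-leader constraints on the adjacency matrix of $G_1$ (and then on the induced $2$-coloring of $\overline{G_1}$), or dynamically via a SAT-modulo-symmetries solver, and I would additionally single out a vertex of maximum color-$1$ degree and case-split on its degree $d\in\{0,1,\dots,7\}$. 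Each resulting subinstance would be handed to a modern CDCL solver, split by a look-ahead solver into many cubes and solved in parallel (cube-and-conquer) where necessary, and the resulting DRAT certificates combined and checked by a formally verified proof checker.

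The main obstacle is precisely this unsatisfiability certification: even granted $\Delta(G_1)\le 7$ and $\alpha(G_1)\le 7$, the number of essentially distinct colorings of $K_{15}$ is far beyond hand analysis, so the proof stands or falls on (i) symmetry breaking that is simultaneously sound and strong enough for the search to close, and (ii) a parallelization-and-verification pipeline producing and checking an UNSAT proof of manageable total size. A secondary, routine point is to confirm that the auxiliary structural clauses are genuinely implied (the short arguments above do this), so that no pruning is taken on trust. Should the monolithic encoding prove intractable, a fallback is to enumerate with \texttt{nauty} all non-isomorphic triangle-free graphs on $15$ vertices with $\Delta\le 7$ and $\alpha\le 7$ and, for each such candidate $G_1$, make a much smaller SAT call deciding whether $\overline{G_1}$ admits a $2$-coloring free of $C_6$ in both colors; this decouples the search but is viable only if the enumeration itself is feasible.
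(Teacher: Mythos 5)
Your proposal is correct and follows essentially the same route as the paper: encode the coloring of $K_{15}$ as a CNF with one variable per edge--color pair, forbid monochromatic $C_3$ in color 1 and $C_6$ in colors 2 and 3, add symmetry-breaking clauses, and certify unsatisfiability with a SAT solver (the lower bound being already known). The paper needed none of your extra machinery---no degree/independence preprocessing, case splits, or cube-and-conquer; static symmetry breaking via Shatter plus a single Kissat run sufficed, finishing in 223 minutes.
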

\begin{theorem} \label{CycleUBs2}
    $R(C_5,C_6,C_6) = 15$.
\end{theorem}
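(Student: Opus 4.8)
The plan is to prove the new inequality $R(C_5,C_6,C_6)\le 15$; combined with the known lower bound $R(C_5,C_6,C_6)\ge 15$ (which is recorded in the survey \cite{RamseySurvey}, and for which a critical $3$-coloring of $K_{14}$ can in any case be produced by the same search used below), this yields the stated equality and sharpens the bound $R(C_5,C_6,C_6)\le 17$ of Lidick\'y and Pfender \cite{LidickyPfenderSDPRamsey}. Concretely, I want to show that every $3$-coloring of the edges of $K_{15}$ contains a monochromatic $C_5$ in color $1$ or a monochromatic $C_6$ in color $2$ or color $3$.

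I would do this with a SAT solver, following the approach already used for Ramsey computations in \cite{R334Equals30,WJWBooks}. Introduce a Boolean variable $x_{e,c}$ for each of the $\binom{15}{2}=105$ edges $e$ and each color $c\in\{1,2,3\}$, with clauses forcing exactly one color per edge. For each $5$-subset of the vertices and each of the $4!/2=12$ distinct cycles on it, add a clause forbidding all five edges from receiving color $1$; likewise, for each $6$-subset and each of the $5!/2=60$ distinct cycles on it, add a clause forbidding all six edges from receiving color $2$, and a separate clause for color $3$. This produces a CNF with a few hundred thousand clauses whose unsatisfiability is exactly the assertion $R(C_5,C_6,C_6)\le 15$.

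The main obstacle is the sheer size of the search space ($3^{105}$ colorings), so the instance has to be cut down substantially before it is handed to a modern solver such as CaDiCaL or Kissat. First I would add static symmetry-breaking predicates for the vertex-relabeling action of $S_{15}$ (a lexicographic-leader constraint is standard here) together with the transposition swapping colors $2$ and $3$. Second, I would exploit structure: a $C_5$-free graph has clique number at most $4$, and if the color-$1$ graph contained an independent set of $8$ vertices then the induced $2$-coloring of $K_8$ in colors $2,3$ would contain a monochromatic $C_6$ because $R(C_6,C_6)=8$, so the color-$1$ graph must have independence number at most $7$. These facts can either be fed to the solver as extra clauses or used to enumerate the admissible color-$1$ graphs up to isomorphism and solve the residual $2$-coloring problem on each; if a single call remains too hard, a cube-and-conquer split on a well-chosen block of edges should close it.

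I expect the difficulty to be computational rather than conceptual: the real work is confirming that the (reduced) instance is genuinely infeasible and emitting a DRAT certificate that an independent checker can verify, since an unverified UNSAT claim would not be acceptable for a Ramsey upper bound. The identical pipeline should also settle Theorem~\ref{CycleUBs}, $R(C_3,C_6,C_6)=15$, by replacing the color-$1$ constraint ``$C_5$-free'' with ``triangle-free'' (which is more restrictive and should make that instance easier), again improving on the previously known $R(C_3,C_6,C_6)\le 18$.
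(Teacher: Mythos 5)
Your proposal is essentially identical to the paper's proof: the authors use the same CNF encoding (Encoding \ref{EncodingRamseySAT}) for $F_{15}(C_5,C_6,C_6)$, add static lex-leader symmetry-breaking clauses via Shatter, and verify unsatisfiability with Kissat (142 minutes), with the lower bound taken as known. The extra reductions you suggest (color-swap symmetry, independence-number bounds, cube-and-conquer) turn out to be unnecessary here.
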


The \emph{hypergraph Ramsey number} $R(G_1,\dots,G_k;r)$ is the smallest $n$ such that every $k$-coloring of the $r$-subsets of $[n]$ contains a monochromatic copy of the hypergraph $G_i$ in color $i$. Let $K_4^-$ denote the 3-uniform hypergraph on 4 vertices where all but one of the four 3-subsets of $[4]$ are hyperedges. A coloring showing a lower bound of 13 was found by Exoo \cite{ExooHypergraphs} and the upper by Lidick\'y and Pfender stood at 14 \cite{LidickyPfenderSDPRamsey}. The following result closes this gap.

\begin{theorem} \label{HypergraphUBs}
$R(K_4^-,K_4^-,K_4^-;3) = 13$
\end{theorem}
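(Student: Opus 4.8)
The equality splits into two inequalities. The lower bound $R(K_4^-,K_4^-,K_4^-;3)\ge 13$ is already available: Exoo exhibited a $3$-coloring of the $3$-subsets of $[12]$ with no monochromatic $K_4^-$ \cite{ExooHypergraphs}, so the first step is merely to re-verify that coloring by checking all $\binom{12}{4}=495$ four-element subsets. Hence the content of the theorem is the matching upper bound $R(K_4^-,K_4^-,K_4^-;3)\le 13$, i.e.\ that every $3$-coloring of $\binom{[13]}{3}$ contains a monochromatic $K_4^-$. Since a coloring of $\binom{[n]}{3}$ with no monochromatic $K_4^-$ restricts to one of any $13$-element subset, it is enough to rule out $n=13$. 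I do not expect a short purely combinatorial argument here: fixing a vertex $v\in[13]$ turns the triples through $v$ into a $3$-coloring of $E(K_{12})$ and the triples avoiding $v$ into a $3$-coloring of $\binom{[12]}{3}$, and a monochromatic $K_4^-$ corresponds either to a monochromatic triangle in the link or to a link-edge-pair together with a same-colored base triple; since $3$-colorings of $K_{12}$ can avoid monochromatic triangles ($R(3,3,3)=17$), the link alone does not force the conclusion, and the interaction between link and residual hypergraph seems awkward to control by hand.

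Instead, the plan is to reduce the $n=13$ case to a Boolean satisfiability question, as in the rest of the paper. I introduce, for each of the $\binom{13}{3}=286$ triples, variables encoding which of the three colors it receives (for instance a one-hot encoding with an exactly-one clause per triple). For each of the $\binom{13}{4}=715$ four-sets $T$ and each color $c$, the forbidden event "at least three of the four triples inside $T$ have color $c$" is excluded by the $\binom{4}{3}=4$ clauses that forbid each three of those four triples from being simultaneously color $c$; this is $715\cdot 3\cdot 4$ short clauses. A satisfying assignment is precisely a $3$-coloring of $\binom{[13]}{3}$ with no monochromatic $K_4^-$ (the "at least three" phrasing automatically also catches a four-set whose four triples are all one color), so Theorem \ref{HypergraphUBs} is equivalent to this CNF being unsatisfiable.

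The formula is invariant under the full symmetric group $S_{13}$ acting on the vertex labels, so running a solver on the raw instance is wasteful. I would add symmetry-breaking constraints — fixing the colors of a canonical initial family of triples, and/or lexicographic ordering predicates derived from the $S_{13}$-action — and then hand the instance to a modern CDCL SAT solver, expecting the answer UNSAT. To make the result trustworthy I would have the solver emit a DRAT (or LRAT) refutation and check it with a formally verified checker, as is standard for SAT-certified nonexistence results. The main obstacle is that $n=13$ sits exactly at the Ramsey threshold (the lower bound is also $13$), which is typically the hardest regime for SAT, so a naive encoding may not terminate; the real work is designing symmetry breaking (possibly combined with a cube-and-conquer split) strong enough to close the case while keeping the proof certificate small enough to verify. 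A secondary, lighter concern is making the translation between "monochromatic $K_4^-$" and the clause set completely airtight.
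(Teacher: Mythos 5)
Your proposal takes essentially the same route as the paper: the lower bound is inherited from Exoo's coloring of the triples of $[12]$, and the upper bound is obtained by encoding the $n=13$ case as a CNF with one-hot color variables per triple and clauses forbidding each monochromatic copy of $K_4^-$, adding static symmetry-breaking clauses, and running a CDCL solver to get UNSAT (the paper uses Shatter for symmetry breaking and Kissat, which refutes the formula in 645 seconds). Your concern that the instance might need cube-and-conquer turns out to be unfounded in practice, and the proof-certificate step you suggest is a reasonable strengthening the paper does not report.
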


Though finding a single coloring is enough to prove a Ramsey number lower bound, an interesting question is to enumerate \emph{all} the colorings (graphs) for given Ramsey numbers up to isomorphism. We say that a coloring (graph) is a \emph{critical} coloring (graph) for a Ramsey number $n = R(G_1,\dots,G_k)$ if it is a $k$-coloring of $K_{n-1}$ that does not contain a copy of $G_i$ in color $i$. As mentioned above, often these graphs have special properties or symmetries. We used SAT modulo symmetries (SMS), a method that leverages a SAT solver to generate graphs with given properties, to enumerate critical graphs for the Ramsey numbers $R(C_4,K_{1,s})$ and $R(C_6,K_{1,s})$ \cite{SMS} . These numbers were convenient because relatively large tables of their values exist \cite{RamseySurvey}. Of particular interest is the fact that the critical graphs for $R(C_4,K_{1,12})$ and $R(C_4,K_{1,28})$ are unique. The latter is the unique (6,3)-cage (that is, a 6-regular graph of girth 3) that does not contain 4-cycles \cite{EzeJajcayJookenCages}. 

\begin{theorem} \label{TheoremSMS} The following critical graph counts hold.

\begin{center}
\begin{tabular}{|c|c|c|}
\hline
    $s$ & $R(C_4,K_{1,s})$ & \# critical graphs  \\
    \hline
    3 & 6 & 2 \\
    4 & 7 & 4 \\ 
    5 & 8 & 10\\
    6 & 9 & 30\\
     7 & 11 & 5 \\
     8 & 12& 9\\
     9 & 13 & 57 \\
     10 & 14 & 503 \\
     11 & 16 & 2 \\
     12 & 17 & 1 \\
     13 & 18 & 8 \\
     14 & 19 & 132 \\
     15 & 20 & 4181 \\
     16 & 21 & 195579 \\
     21 & 27 & 17 \\
     22 & 28 & 353\\
     28 & 35 & 1  \\
     29 & 36 & 6 \\
      \hline
\end{tabular}
\begin{tabular}{|c|c|c|}
\hline
    $s$ & $R(C_6,K_{1,s})$ & \# critical graphs  \\
    \hline
    4 & 8 & 1 \\ 
    5 & 9 & 1  \\
    6 & 11 & 2 \\
    7 & 11 & 38 \\
    8 & 12 & 54 \\
    9 & 14 & 4 \\
    10 & 15 & 7\\
    11 & 16 & 10\\
    12 & 17 & 2\\ 
      \hline
\end{tabular}
\end{center} 
\end{theorem}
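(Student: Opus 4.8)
The plan is to treat Theorem~\ref{TheoremSMS} as an exhaustive isomorph-free generation problem and discharge it with the SAT modulo symmetries (SMS) framework~\cite{SMS}. The first step is the standard translation of the two-color critical condition into a single-graph condition: a critical coloring for $n = R(C_4,K_{1,s})$ is a graph $G$ on $n-1$ vertices with color class $1$ equal to $E(G)$ and color class $2$ equal to $E(\overline{G})$, so the constraint is that $G$ is $C_4$-free and $\overline{G}$ is $K_{1,s}$-free. Since $\overline{G}$ has no $K_{1,s}$ exactly when $\Delta(\overline{G})\le s-1$, and $\deg_{\overline{G}}(v) = (n-2)-\deg_G(v)$, this is equivalent to $\delta(G)\ge n-1-s$. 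Thus I must enumerate, up to isomorphism, the $C_4$-free graphs on $n-1$ vertices with minimum degree at least $n-1-s$, and likewise the $C_6$-free graphs with the same degree bound for the $R(C_6,K_{1,s})$ column, the relevant Ramsey values $n$ being taken from the survey~\cite{RamseySurvey}.

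Next I would build the propositional encoding. Introduce a Boolean variable $e_{ij}$ for each pair $\{i,j\}\subseteq[n-1]$. For each vertex $i$ add a cardinality constraint $\sum_{j\ne i} e_{ij}\ge n-1-s$ via a sequential-counter or totalizer encoding. Forbid $C_4$ by adding, for every $4$-cycle on distinct vertices $a,b,c,d$, a clause $\neg e_{ab}\vee\neg e_{bc}\vee\neg e_{cd}\vee\neg e_{da}$; forbid $C_6$ analogously over distinct $6$-tuples, equivalently by ruling out every pair of internally disjoint $3$-paths joining a common pair of endpoints. Then run SMS on this formula: its minimality propagator prunes any partial assignment that is not on a path to the lexicographically least adjacency matrix in its isomorphism class, so that enumerating all models produces exactly one representative per isomorphism type; counting them yields the table entries.

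Two layers of validation are built in. Running the very same encoding on $n$ (rather than $n-1$) vertices should return UNSAT, which re-certifies $R\le n$, while nonemptiness at $n-1$ re-certifies $R\ge n$; this cross-checks that the Ramsey values used are correct. For robustness of the counts I would independently regenerate the small and moderate cases with \texttt{nauty}'s \texttt{geng} (generate all graphs on $n-1$ vertices with the prescribed minimum degree, then filter for $C_4$- or $C_6$-freeness) and compare, and for the two singleton cases $R(C_4,K_{1,12})=17$ and $R(C_4,K_{1,28})=35$ I would verify that the unique output graph matches the expected object --- for the latter, the unique $(6,3)$-cage with no $4$-cycle of~\cite{EzeJajcayJookenCages}.

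The main obstacle is scale together with trustworthiness of the large enumerations. The worst entries, $R(C_4,K_{1,16})=21$ with $195579$ critical graphs on $20$ vertices and $R(C_4,K_{1,15})=20$ with $4181$, have huge search trees, so the real work is in choosing a cardinality encoding that propagates well, exploiting the near-regularity forced by the tight degree bound, keeping the $C_4$ and $C_6$ clause sets compact, and --- since a single solver run should not be trusted for a six-figure count --- obtaining an independent recount (a second encoding, a different SMS configuration, or a pure orderly-generation run) that agrees to the digit.
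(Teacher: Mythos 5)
Your proposal matches the paper's proof: the authors likewise reduce the $K_{1,s}$ condition to a degree bound, forbid the relevant cycles by clauses, and run the SMS framework to exhaustively enumerate the critical graphs up to isomorphism. Your additional cross-checks (UNSAT at $n$ vertices, independent regeneration via \texttt{geng}) go beyond what the paper reports but do not change the method.
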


\section{Definitions and Methods}\label{SectionPreliminaries}
This section gives the complete definitions of the structure of the colorings (block circulant and block Cayley) that we searched for in order to improve lower bounds. We also give additional details on how to encode this into a SAT formula as well as other solving techniques we used.  

\subsection{Block circulant and block Cayley colorings}

Let $\Gamma$ be a group. A \emph{Cayley graph} is a graph $G$ with vertex set $\Gamma$ such that there exists a set $S \subseteq \Gamma$ such that for all $x,y \in V = \Gamma$, we have that $\{x,y\}$ is an edge if and only if $xy\inv \in S$. The set $S$ cannot contain the identity of $\Gamma$ if $G$ is loopless, and it must be closed under inverses if edges are undirected. We can also consider \emph{directed} Cayley graphs where $S$ need not be closed under inverses.  

A coloring of $K_n$ is \emph{Cayley} with respect to a group $\Gamma$ of order $n$ in color $i$ if the subgraph induced by color $i$ is a Cayley graph of $\Gamma$. If $\Gamma = \Z_n$, we say the coloring is \emph{circulant}. A coloring of $K_{kn}$ is \emph{block Cayley} with $k$ blocks with respect to a group $\Gamma$ of order $n$ in color $i$ if the adjacency matrix $A$ of the graph induced by color $i$ is of the form $$A = \begin{pmatrix}C_{11} & C_{12} & \dots & C_{1k} \\ C_{21}  & C_{22} & \dots & C_{2k} \\ \vdots & \vdots & \ddots & \vdots 
& \\ C_{k1} & C_{k2} & \dots & C_{kk}\end{pmatrix}, $$ where $C_{ij}$ is 
the matrix of a directed Cayley graph (on $\Gamma$) for all $i,j$. If $\Gamma = \Z_n$, we say the coloring is \emph{block circulant}. If a coloring is block Cayley (circulant) in all colors, we say it is block Cayley (circulant).


\subsection{SAT encoding}

The following is a natural encoding for determining whether $R(G_1,\dots, G_k) > n$. Given a complete graph $K_n$, for each edge $e$ and color $c$, we have a Boolean variable $x_{e,c}$ that is assigned true if and only if $e$ is assigned color $c$. For each edge $e$, we want it to be colored exactly one color, so we include the following two formulas $P_e$ and $O_e$:

\begin{align*}
    P_e := x_{e,1} \vee x_{e,2} \vee \cdots \vee x_{e,k}, \quad 
    O_e := \bigwedge_{1\le c_1 < c_2 \le k} (\bar{x}_{e,c_1} \vee \bar{x}_{e,c_2}).
\end{align*}

The formula $P_e$ states that the edge $e$ is colored at least one color, and the formula $O_e$ states that $e$ is colored at most one color. The latter formula is not strictly necessary, but it ensures that there is a one-to-one correspondence between satisfying assignments and colorings. Then for each subgraph $H$ of $K_n$ isomorphic to $G_i$, we need to include a clause that forbids all the edges of $H$ being assigned color $i$. The clause $N_{H,i}$ accomplishes this: 
\begin{align*}
N_{H,i} :=  \bigvee_{e\in E(H)} \bar x_{e,i}.
\end{align*}
Putting it all together, we have the following encoding. 
\begin{encoding}\label{EncodingRamseySAT}
	The Ramsey number $R(G_1,G_2,\dots, G_k)$ is at most $n$ if the formula $F_n(G_1,G_2,\dots, G_k)$ is unsatisfiable, where 
	$$F_n(G_1,G_2,\dots, G_k) := \Bigl(\bigwedge_{e \in E(K_n)} P_e \Bigr) \wedge \Bigl(\bigwedge_{e \in E(K_n)} O_e \Bigr) \wedge\Bigl(\bigwedge_{i=1}^k\bigwedge_{H\subset K_n, H\cong G_i} N_{H,i} \Bigr).$$
	Moreover, if $F_n(G_1,G_2,\dots, G_k)$ is satisfiable, then $R(G_1,G_2,\dots, G_k) \ge n+1$.
\end{encoding}
\begin{remark}
    For Ramsey numbers involving hypergraphs, the encoding essentially the same. We have variables for the possible colors assigned to hyperedges, ensure each hyperedge is assigned exactly one color, and we forbid monochromatic copies of hypergraphs in the same way. 
\end{remark}

The formulas $F_n(G_1,\dots,G_k)$ are in conjunctive normal form (CNF), which is the standard input to SAT solvers. Unfortunately, for many Ramsey numbers, these formulas alone are not sufficient to compute good bounds in a reasonable amount of time. To obtain better lower bounds, for example, we need to restrict the search space. Given a group $\Gamma$ of order $n$, we can add the following clauses to enforce that a coloring of $K_{bn}$ is block Cayley with $b$ blocks in a given color $i$:

\begin{align*}
    C_{\Gamma,i} := \bigwedge_{0 \le b_1 \le b_2 < b} \bigwedge_{g \in \Gamma\setminus \{e\}} \bigwedge_{h_1,h_2 \in \Gamma} (\bar{x}_{\{h_1+b_1n, g*h_1+b_2n\},i} \vee x_{\{h_2+b_1n, g*h_2+b_2n\},i}) 
\end{align*}

In the above formula the vertex set of $K_{bn}$ is $\{0,\dots,bn-1\}$ and the blocks are labeled 0 to $b-1$. We are considering the underlying set of $\Gamma$ to be $\{0,\dots,n-1\}$ and denoting the group operation by $*$. The clauses have the effect that in each pair of blocks $(b_1,b_2)$, there is a subset $S = S_{b_1,b_2}$ of $\Gamma \setminus \{e\}$ that determines which edges are assigned color $i$. It is possible to modify Encoding \ref{EncodingRamseySAT} directly instead and use fewer variables, having for instance for each pair of blocks $(b_1,b_2)$ a variable $x_{g,b_1,b_2,i}$ that is true if and only if all edges $\{x+b_1n,y+b_2n\}$ with $x*y\inv = g$ are assigned color $i$. However, in practice this doesn't matter because the solver we used, Kissat, performs this simplification automatically in preprocessing \cite{Kissat}. 

Sometimes we can extend Cayley colorings by one or more vertices. One can show that $R(3,3,3,3) \ge 51$, for example, with is a 3-block Cayley coloring using the group SmallGroup(16,2), which is isomorphic to $\Z_4 \times \Z_4$, extended by 2 vertices. If we want to extend a block Cayley coloring with a group of order $n$ and $b$ blocks by $c$ vertices, this can be done in the SAT encoding by using Encoding $\ref{EncodingRamseySAT}$ for all $bn+c$ vertices and simply including the block Cayley clauses for the induced coloring on the first $bn$ vertices. 

The addition of the block Cayley clauses to $F_{bn}(G_1,\dots, G_k)$ results in a dramatic reduction in the search space, and satisfying assignments to the new formula are still satisfying assignments to the original. However, we cannot add block Cayley clauses to produce upper bounds for Ramsey numbers since there may exist colorings that are not block Cayley. Here we must introduce \emph{symmetry breaking} clauses which do not affect the satisfiability of the formula, but prevent the solver from wasting time checking isomorphic assignments. The main symmetry we are interested in breaking is the labeling of the vertices. We do this with the automated symmetry breaking software Shatter \cite{Shatter}. This was sufficient for proving the upper bounds in Theorems \ref{CycleUBs}, \ref{CycleUBs2} and \ref{HypergraphUBs}. 

\subsection{SAT modulo symmetries}

SAT modulo symmetries (SMS) is a framework to enumerate graphs with a SAT solver and \emph{dynamic} symmetry breaking, wherein intermediate assignments are checked for lexicographic minimality and isomorphic assignments are filtered. The software \cite{SMS} can be used to generate graphs with given constraints (for example, number of vertices, number of edges, bounded degrees, chromatic number, etc.). It allows also for custom constraints to be entered with a CNF formula. For instance, we can forbid 4-cycles in a graph by adding all clauses of the from $\bar{x}_{e_1} \vee \bar{x}_{e_2} \vee \bar{x}_{e_3} \vee \bar{x}_{e_4}$, where $e_1,e_2,e_3,e_4$ are edges in a 4-cycle. 

\section{Constructions and Proofs}\label{SectionConstructions}

In this section we prove our main results and discuss our computations. All SAT solving was done with the solver Kissat \cite{Kissat}, and all Cayley tables and other group data were acquired through GAP \cite{GAP4}. 

\subsection{Constructions for lower bounds}
The proofs of Theorem \ref{K4J4J4lb} and Theorem \ref{K3K4C4C4lb} come from explicit colorings. It is straightforward to verify via computer that these colorings have the desired properties, i.e., they avoid subgraphs of particular colors. 

\begin{proof}[Proof of Theorem \ref{K4J4J4lb}]
    The following edge coloring of $K_{34}$ does not contain a copy of $K_4$ in color 2 or $K_4-e$ in colors 0 and 1. 
\begin{align*}
x121210100210122021110012012202220\\[-5pt]
1x11102102021220012101001001220011\\[-5pt]
21x2220111000122122000122112001200\\[-5pt]
112x022122011201212200011201110220\\[-5pt]
2120x10021210220121122212020100000\\[-5pt]
10221x1021212221021011102122110020\\[-5pt]
020201x202120120211101202201020222\\[-5pt]
1111002x22012120110220011200001001\\[-5pt]
00122202x0122112222022100112200011\\[-5pt]
021211220x021022122002012221000021\\[-5pt]
2000221010x22111201021101121220020\\[-5pt]
12011121222x1100122120011201221021\\[-5pt]
010102022121x200221121002210212021\\[-5pt]
1212221110112x00021122012120121121\\[-5pt]
22202222121000x0221000201022020011\\[-5pt]
202101002210000x121201211012200212\\[-5pt]
0012102121212021x11022120000122020\\[-5pt]
21212211220222221x2210201011022221\\[-5pt]
122211102212111112x122100001111221\\[-5pt]
1102101200011102021x20002120112120\\[-5pt]
10002102202222002122x2001022120101\\[-5pt]
010021102210120120202x001211221022\\[-5pt]
0010212010100022121000x01211122211\\[-5pt]
10211001010101012000000x1222122022\\[-5pt]
212122210211221101021111x200022200\\[-5pt]
0012012212122100000102222x02021222\\[-5pt]
10102200122012210102211200x1112221\\[-5pt]
212102102111002201102112021x112102\\[-5pt]
2201110020222102101112110011x00022\\[-5pt]
02010120002212202211222222110x2000\\[-5pt]
201000010001210022120122212202x222\\[-5pt]
2022002000000102022110202221002x02\\[-5pt]
21020220122222112222021202202020x2\\[-5pt]
010000211101111201101212021220222x\\[-5pt]
\end{align*}
\end{proof}

\begin{proof}[Proof of Theorem \ref{K3K4C4C4lb}]
    The following edge coloring of $K_{48}$ does not contain a copy of $K_3$ in color 3, $K_4$ in color 2, or $C_4$ in colors 0 and 1. 
\begin{align*}
    x23311302302303233233201321232030233032223223233\\[-5pt]
2x0332033233033023321222123322332003113321233132\\[-5pt]
30x223023211232231320223221330232322223312033233\\[-5pt]
332x13320330202322233233203321233222120322322132\\[-5pt]
1321x3223323210320231133322322332232320023122332\\[-5pt]
12333x122132322232232022321231233032223313222222\\[-5pt]
300321x23221233233133213321033120202322202320322\\[-5pt]
0322222x2323023323202332222202332233210331232233\\[-5pt]
23303232x231122202322232221232332223332322231232\\[-5pt]
322331232x10232103223320203303202332322212223022\\[-5pt]
0313232231x2332021232232322223302133202222232120\\[-5pt]
23103213102x331330332133121232221033202232222222\\[-5pt]
302223201233x22233023321233223013303202322221322\\[-5pt]
0330123223332x3220323223323212332212332231223222\\[-5pt]
33220233222123x202023322233220213223223333223232\\[-5pt]
202332232103222x11230223320220321132332232233223\\[-5pt]
3232233200233201x2322223322233200202231222332002\\[-5pt]
33120233231030212x032322233032222222322023321222\\[-5pt]
233222123223030230x32022123311232132223230233232\\[-5pt]
3223333022332223233x1233220033123202322223121222\\[-5pt]
31031232232233302221x223302133222223223023333202\\[-5pt]
222210232321323223022x33202221232223222123233222\\[-5pt]
0223321332332222222323x3332103322322333132123323\\[-5pt]
12333232202313233223333x222022231222332120232102\\[-5pt]
312233322231232332123232x22332013322222331233212\\[-5pt]
2220222220223232232200322x3322310331321232220033\\[-5pt]
13132112132133302330222223x202322212222321232222\\[-5pt]
233332022322222220301210332x23233212333202332231\\[-5pt]
3232233030232122331332023202x0333213222322221212\\[-5pt]
22012132233232003213313222230x332223322120322322\\[-5pt]
032232133232032322212232033233x33332123321233023\\[-5pt]
3333332330021312023223231123333x2322223322233211\\[-5pt]
02232302222132310223222130233232x212230222332112\\[-5pt]
203220222310322122122232332222332x03201233223322\\[-5pt]
3022330323330123023022222311123210x3020222223312\\[-5pt]
33222223323332322222332221223322233x133222223223\\[-5pt]
012132323322232323232233232323122201x12333320233\\[-5pt]
3122222132000323322222332223222230231x2003320320\\[-5pt]
23300320222222321232323221232233010322x222232023\\[-5pt]
233303233222323220220111323231332222302x22222323\\[-5pt]
2212210321232333223222323320222223223022x3202331\\[-5pt]
31223321222221322303332012122012232233223x321233\\[-5pt]
220312322222222233213212222323223222332223x22310\\[-5pt]
2332222332322223323233233233223332222232022x3332\\[-5pt]
33322202132213332131333230221233233300222123x032\\[-5pt]
212132322012322202222231202223021332230332330x01\\[-5pt]
3333322332222232023202201323122112123222331330x3\\[-5pt]
32322223220222232222223223212231222330331302213x\\[-5pt]
\end{align*}
\end{proof}
We found these colorings via Kissat and Encoding \ref{EncodingRamseySAT}. Both solve times were fast (6 seconds for the first coloring, 27 seconds for the second). For the first coloring, we included constraints to enforce that the coloring was block Cayley on the first 32 vertices using the group SmallGroup(16,6) in GAP. For the second we enforced the Cayley constraints on the $K_3$ and $K_4$ colors only. The group we chose was
SmallGroup(48,4) in GAP, which is isomorphic to $\Z_8 \times S_3$. We also found colorings using other groups, namely SmallGroup(48,$i$) for $i$ = 4,11,12,16,18,19,21,26,27,34,35,37,38,39,40,42,43,45. None of these groups are abelian. We also found colorings that were two block Cayley in the $K_3$ and $K_4$ colors for SmallGroup(24,$i$) for $i = 4,7$, and 3 block Cayley for SmallGroup(16,$i$) for $i = 1,2,4,10,12,13$. For each, we tried extending the coloring by one vertex for at least an hour, but we were unsuccessful. 

\subsection{SAT-based proofs}

\begin{proof}[Proofs of Theorems \ref{CycleUBs}, \ref{CycleUBs2}, and \ref{HypergraphUBs}]

For each upper bound, we used Encoding \ref{EncodingRamseySAT} to obtain formulas $F_{15}(C_3,C_6,C_6)$, $F_{15}(C_5,C_6,C_6)$, and $F_{13}(K_4^-,K_4^-,K_4^-)$, where $K_4^-$ denotes the 3-uniform hypergraph on 4 vertices with all but one hyperedge present. For each we added symmetry breaking clauses using Shatter and ran the solver Kissat. All formulas were unsatisfiable. 
\end{proof}

The formula $F_{13}(K_4^-,K_4^-,K_4^-)$ was shown to be unsatisfiable by Kissat in 645 seconds. The others were somewhat longer: the formula $F_{15}(C_3,C_6,C_6)$ took 223 minutes, and $F_{15}(C_5,C_6,C_6)$ took 142 minutes. 

\begin{proof}[Proof of Theorem \ref{TheoremSMS}]
For each case, we used the SMS software \cite{SMS} and generated constraints to find graphs of bounded degree (so that they do not contain a copy of $K_{1,s}$) that do not contain cycles of the given length.  
\end{proof}
The successful computations here were relatively fast, with the longest being the enumeration of the 353 graphs on 27 vertices that do not contain $C_4$ or the complement of $K_{1,22}$, which took 816 seconds. We attempted to compute some of the missing entries in the table, for instance the critical graphs for $R(C_4,K_{1,17})$, but our computations timed out. It may be possible to compute these entries using SMS with additional time investment, but an alternative or more sophisticated approach is likely necessary. 

\section{Concluding remarks}

We also tried to improve the lower bounds on several other Ramsey numbers, for example $R(K_3,K_4,K_4 -e)$, $R(K_4,K_4,K_4-e), R(3,3,3,4)$, and $R(3,3,5)$, with block Cayley colorings, but in these cases we only managed to tie the best known lower bounds. It would be interesting to investigate whether other algebraic or geometric constructions can improve on the bounds given in this paper. We are optimistic, however, that the lower bounds are close to the true values. The upper bounds are generally more difficult, but we hope the gaps can be narrowed in the near future. 

\bibliography{RamseyCayley}
\bibliographystyle{abbrv}
\end{document}